\title{Testing local-global divisibility at a small set of primes}
\author{Alexander B. Ivanov}
\address{Alexander Ivanov, Ruhr-Universit\"at Bochum, Fakult\"at f\"ur Mathematik, IB 2/153, 
Universit\"atsstrasse 150, D-44780 Bochum}
\email{alexander.ivanov@ruhr-uni-bochum.de}
\author{Laura Paladino}
\address{Laura Paladino, Universit\`a della Calabria, Ponte Bucci, Cubo 30B, 87075, Rende (CS), Italy} 
\email{laura.paladino@unical.it}
\begin{document}


 \begin{abstract}
We show that the local-global divisibility in commutative algebraic groups defined over number fields
can be tested on sets of primes of arbitrary small density, i.e.\ stable and persistent sets. 
We also give a new description of the cohomological group giving an obstruction to the problem. In addition, we show new
examples of stable sets.
 \end{abstract} \normalcolor

\maketitle

\section{Introduction}
Let $k$ be a number field. Let $A$ be a connected commutative group scheme of finite type over $k$. Dvornicich and Zannier investigated a local-global principle for divisibility of rational points on $A$:

\begin{problem}[Local-global divisibility problem \cite{DvornicichZ_01}] \label{problem:DZ}
Let $r$ be a positive integer and let $P \in A(k)$. Suppose that for all but finitely many primes $\mfp$ of $k$, we have $rD_\mfp = P$ for some $D_{\mfp} \in A(k_{\mfp})$, where $k_{\mfp}$ denotes the completion of $k$ at $\mfp$. Can one conclude that there exists $D \in A(k)$ with $rD = P$?
\end{problem}

Of course, one may assume that $r$ is a power of a rational prime without loosing any generality. The solution to Problem  \ref{problem:DZ} and to variants of it are known in many cases, in particular, for tori \cite{DvornicichZ_01, Illengo_08} and for elliptic curves  \cite{PaladinoRV_12, PaladinoRV_14, Cre}. In elliptic curves the answer is affirmative for every power $r=p^n$, with $p>B(d)$, where
 $d:=[k:\bQ]$ and 
\[
B(d):=\left\{\begin{array}{ll}
3 & if \hspace{0.1cm} k=\bQ;\\
 (3^{\frac{d}{2}}+1)^2 & if \hspace{0.1cm} d>1.\\
\end{array}
\right.
\]

\noindent 
Observe that for $k\neq \bQ$ the bound $B(d)=(3^{\frac{d}{2}}+1)^2$ is the one appearing in \cite{Oesterle_}, giving an effective version of Merel's Theorem on torsion points of an elliptic curve \cite{Merel_96}, and in particular $E(k)[p] = 0$ for all primes $p>B(d)$.
It is shown in \cite[\S5]{DvornicichP_21} that for a \emph{fixed} elliptic curve $E$, a \emph{fixed} number field $k$ and a \emph{fixed} power $r$, Problem \ref{problem:DZ} admits an explicit and effective solution, that is there is an effectively computable constant $C(k,E,r)>0$ such that to deduce global divisibility, it suffices to test the local one for all primes $\fp$ of $k$ with norm $N\fp<C(k,E,r)$. 

\smallskip

In this note we ask, whether in Problem \ref{problem:DZ} one can considerably shrink the set of primes where local divisibility is tested, simultaneously for \emph{all} $A$,  \emph{all} $k$ and  \emph{all} $r$. It turns out that this strengthened version of the problem admits a solution. More precisely, we show that certain sets of primes with arbitrary small Dirichlet density suffice, cf.\ Theorem \ref{thm:p_stable_sets_test_p_divisibility}.
For clarity, let us state our main result in the case of elliptic curves, where the original Problem \ref{problem:DZ} is quite well-understood.

\begin{thm}[see Corollary \ref{cor:for_absolutely_stable_S} for most general statement]\label{thm:small_test_set} 
For any $\varepsilon > 0$ there exists a set $S$ of primes of $\bQ$, such that for all number fields $k$, all elliptic curves $E/k$ and all primes $p$ one has $0 < \delta_k(S) < \varepsilon$ and the following hold:
\begin{itemize}
\item[(1)] If a point $P \in E(k)$ is locally divisible by $p$ at any $\fp \in S_k$, then it is globally divisible by $p$ in $E(k)$.
\item[(2)] Suppose that $p > B(d)$. Then for all $n \geq 1$, if a point $P \in E(k)$ is locally divisible by $p^n$ at any $\fp \in S$, then it is globally divisible by $p^n$  in $E(k)$.
\end{itemize}
\end{thm}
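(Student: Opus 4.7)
The plan is to derive Theorem \ref{thm:small_test_set} as a specialization of the general result referenced as Corollary \ref{cor:for_absolutely_stable_S}, by exhibiting a single set $S \subset \Spec \bZ$ that is simultaneously ``stable enough'' for every number field $k$, every elliptic curve $E/k$, and every prime $p$. The strategy splits into three independent ingredients: the construction of a uniformly stable $S$ of small density; the reduction of local-global divisibility to a question in Galois cohomology of $E[p^n]$; and the verification that stability of $S$ kills the resulting obstruction.

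For the first ingredient, I would invoke the classical construction of absolutely stable sets of rational primes: a set $S$ such that for every number field $k$ and every finite Galois extension $L/k$, every cyclic subgroup of $\Gal(L/k)$ is conjugate to a decomposition group at some prime of $k$ lying over an element of $S$. Variants of this go back to Jehne and Neukirch and one can arrange $\delta_k(S)$ to lie in $(0,\varepsilon)$ for arbitrarily small $\varepsilon$ and all $k$ simultaneously. This is what will supply the density bound in the statement.

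For the cohomological reduction, one uses the fact (made explicit earlier in the paper via the new description of the obstruction group) that local-global divisibility of $P \in E(k)$ by $p^n$ is controlled by a subgroup $H^1_{\loc} \subseteq H^1(\Gal(k(E[p^n])/k), E[p^n])$ consisting of classes that restrict trivially to every decomposition subgroup at a prime of good reduction. Replacing ``every prime'' by ``every prime in $S_k$'' yields a possibly larger subgroup $H^1_S$, and the whole argument turns on showing $H^1_S = H^1_{\loc}$ (or more precisely that both vanish once $S_k$ meets the stability hypothesis). The key general fact is that $H^1$ of a finite group with coefficients in a finite abelian module is detected on cyclic subgroups; combined with absolute stability of $S$, which ensures every cyclic subgroup of $\Gal(k(E[p^n])/k)$ is realized as a decomposition group at some $\fp \in S_k$, this forces any class in $H^1_S$ to vanish and hence gives the local-global principle on $S_k$.

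Part (1) is then just the case $n = 1$ applied to $M = E[p]$, which needs only the $p$-stability of $S$ for the single module $E[p]$ and imposes no hypothesis on $p$ because it concerns divisibility by $p$, not by a higher power. Part (2) requires the stronger statement for $p^n$; here the hypothesis $p > B(d)$ enters through $E(k)[p] = 0$, which via the standard inductive lifting argument of \cite{PaladinoRV_12, PaladinoRV_14} reduces the obstruction for $p^n$ to the one for $p$, so that once again only stability for $E[p]$ is needed. The main obstacle is the uniformity: the set $S$ is fixed before $k$, $E$ and $p$, so one must ensure that the classical construction of stable sets can be carried out over $\bQ$ in a way that remains stable after base change to every $k$ and for every Galois extension $k(E[p^n])/k$. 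This is exactly what ``absolutely stable'' is designed to achieve, and verifying that the density bound $\delta_k(S)<\varepsilon$ descends correctly to $\bQ$-primes is the only genuinely quantitative step.
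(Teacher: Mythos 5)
Your overall architecture matches the paper's: construct one small-density set $S$ that is uniformly stable over all number fields, reduce parts (1) and (2) to the vanishing of $H^1_{\rm loc}(\Gal_{K_n/k},E[p^n])$, and then verify that vanishing. But there is a genuine error at the heart of your cohomological step. You assert as ``the key general fact'' that $H^1$ of a finite group with finite coefficients is detected on cyclic subgroups, and conclude that stability of $S$ alone ``forces any class in $H^1_S$ to vanish.'' This is false: the map $H^1(\Gamma,M)\to\prod_{C}H^1(C,M)$ over cyclic subgroups $C$ is in general not injective --- its kernel is precisely the obstruction group $H^1_{\rm loc}(\Gamma,M)$ of \eqref{eq:cohomology_with_local_conditions}, which is nonzero in the known counterexamples to local-global divisibility (e.g.\ elliptic curves with $p=2,3$ and higher prime powers). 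If your ``key fact'' were true, the whole problem would be trivial. The correct general fact is injectivity of restriction to a $p$-Sylow subgroup for $p$-primary $M$ (Lemma \ref{lm:local_conditions_only_psubgroups} is the relevant reduction). What stability buys is only the identification of the kernel of the $S$-local restriction with $H^1_{\rm loc}$; the vanishing of $H^1_{\rm loc}$ is a separate arithmetic input independent of $S$. For part (1) that input is the observation that $\Gal_{K_1/k}\subseteq\GL_2(\bF_p)$ has cyclic $p$-Sylow subgroup, so a class trivial on all cyclic $p$-subgroups is trivial on a $p$-Sylow and hence zero; your proposal never supplies this, and your stated reason (``it concerns divisibility by $p$, not by a higher power'') is not an argument. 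For part (2), your appeal to \cite{PaladinoRV_12,PaladinoRV_14} is essentially what the paper does.

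Separately, the construction of $S$ cannot be delegated to a ``classical construction going back to Jehne and Neukirch.'' The property you demand --- that \emph{every} cyclic subgroup of every $\Gal_{L/k}$ be a decomposition group at a prime over $S$ --- is both stronger than needed (only cyclic $p$-subgroups matter, and only those are guaranteed by Lemma \ref{lm:main_lemma_about_stable_sets}) and stronger than what persistence delivers for composite-order cyclic subgroups. The paper builds $S$ explicitly in Proposition \ref{prop:absolutely_persistent_set}, as an infinite union of Chebotarev sets inside a $\bZ_p$-extension, and uses the infinite Chebotarev density theorem to get $\delta_k(S)=\delta_\ell(S_\ell)=\tfrac{1}{p-1}<\varepsilon$ for all finite $\ell/k$. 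This construction and the density computation --- which you defer as ``the only genuinely quantitative step'' --- are a substantive part of the proof, not a citation.
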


Moreover, sets $S$ as in Theorem \ref{thm:small_test_set} exist in abundance, and examples of them can be given explicitly, cf.\ \S\ref{sec:stable_set}. 

It is well known that an obstruction to the validity of Problem \ref{problem:DZ} is given by
the first local cohomology group (see \cite{DvornicichZ_01},\cite{DvornicichZ_07} and Equation \eqref{eq:cohomology_with_local_conditions} below).
Such a group is isomorphic to some modified Tate-Shafarevich groups defined by sets $S$ of primes of density one (see for instance \cite{Creutz_12,DP_22}). If one shrinks the set $S$, the Tate-Shafarevich group can \emph{a priori} become bigger. Our main point is that if we shrink $S$ is an appropriate way, then the Tate-Shafarevich group will stay small. 
This is based on the properties of the so-called \emph{stable} and \emph{persistent} sets of primes studied in \cite{Ivanov_stable_sets}. In \S\ref{sec:stable_set} we also give new examples of such sets with particularly nice properties.

At the end of the paper we state a generalization of a classical question posed by Cassels about the $p$-divisibility of 
elements of the Tate-Shafarevich group and we discuss the implications of our results for such a question.

\subsection*{Notation}
We denote by $p$ a rational prime. We fix once and for all an algebraic closure $\overline \bQ$ of $\bQ$, and consider all algebraic extensions of $\bQ$ as subfields of $\overline \bQ$. Unless stated otherwise, $k$ always denote a number field of degree $d = [K:\bQ]$. If $\ell/k$ is a Galois extension, we denote by $\Gal_{\ell/k}$ its Galois group.
We denote by $\Sigma_k$ the set of primes of $k$. If $S \subseteq \Sigma_k$ and $\ell/k$ is a finite extension, we write $S_\ell$ for the preimage of $S$ under the natural map $\Sigma_\ell \rar \Sigma_k$. We denote by $\delta_k(S) \in [0,1]$ the Dirichlet density of a set $S \subseteq \Sigma_k$, whenever it exists. Whenever we write ``density'' below, we mean ``Dirichlet density''. 

By $A$ we denote a commutative algebraic group defined over $k$ and by $K_n$ the extension of $k$ trivializing $A[p^n]$, i.e.\ the $p^n$-division field of $A$ over $k$, where $p$ is a fixed prime and $n$ is a positive integer.


\subsection*{Acknowledgements} The first author was supported by a Heisenberg
grant of the DFG (grant nr. IV 177/3-1), based at the Universities of Bonn
and Bochum. Also he was supported by the Leibniz Universit\"at Hannover. The second author is a member of INdAM-GNSAGA.

\section{Testing local-global divisibility at a stable set}

\subsection{Review of stable sets}
We recall the following definition.

\begin{Def}[\cite{Ivanov_stable_sets}, \S2]\label{def:recall_stable_sets}
Let $\cL/k$ be an algebraic extension, $S \subseteq \Sigma_k$ and $\lambda > 1$.
\begin{itemize}
\item[(1)] A finite subextension $\cL/K/k$ is called \emph{$\lambda$-stabilizing} (resp. \emph{persisting}) for $S$, if there exists a subset $T \subseteq S$ and some $a \in (0,1]$ such that for all finite subextensions $\cL/L/K$ one has $\lambda a > \delta_L(T_L) \geq a$ (resp. $\delta_L(T_L) = \delta_K(T_K) > 0$).
\item[(2)] $S$ is \emph{$\lambda$-stable} (resp. \emph{persisting}) for $\cL/k$, if it has a $\lambda$-stabilizing (resp. persisting) subextension of $\cL/k$.
\end{itemize}
\end{Def}

There are many natural examples of stable and persistent sets, cf. \cite[\S3]{Ivanov_stable_sets}. For example, if $\ell/k$ is a finite Galois extension, then for $\sigma \in \Gal_{\ell/k}$ the set 
\begin{equation}\label{eq:chebotarev_set}
P_{\ell/k}(\sigma) = \{ \fp \in \Sigma_k \colon \fp \text{ is unramified in $\ell/k$ and } \Frob_{\fp} = C(\sigma,\Gal_{l/k}) \},
\end{equation}
where $C(\sigma,G_{\ell/k})$ denotes the conjugacy class of $\sigma$, is persistent for any algebraic extension $\cL/k$ satisfying $C(\sigma,G_{\ell/k}) \cap \Gal_{\ell/ \cL\cap \ell} \neq \varnothing$, with persisting field $\cL\cap \ell$. In \S\ref{sec:stable_set} we give new interesting examples of persistent sets.

\smallskip

Stable sets generalize sets of density one in the following sense. Let $\ell/k$ be a finite extension. If $S$ is a set of primes of $k$ with density one, then any element of $\Gal_{\ell/k}$ is a Frobenius at $S$, and consequently any cyclic subgroup is a decomposition subgroup of a prime in $S$. Weakening the assumption on $S$ to be $p$-stable for $\ell/k$ with stabilizing field $k$, destroys the claim about \emph{elements}, but the claim about \emph{cyclic $p$-subgroups} (that is, of $p$-power order and not just of order $p$) remains true:


\begin{lm}[\cite{Ivanov_stable_sets}, Lemma 4.4]\label{lm:main_lemma_about_stable_sets}
Let $\ell/k$ be a finite Galois extension, $S$ a set of primes of $k$ and $p$ a rational prime such that $S$ is $p$-stable for $\ell/k$ with $p$-stabilizing field $k$. Then any cyclic $p$-subgroup of $\Gal_{\ell/k}$ is the decomposition subgroup of a prime in $S$. 
\end{lm}

\subsection{$p$-stable sets detect local-global divisibility by $p^n$} { } \hspace{0.1cm}

\par\smallskip Recall the definition of the cohomology group satisfying the \emph{local conditions} \cite{DvornicichZ_01}. Let $\Gamma$ be a finite group
and $M$ a discrete $\Gamma$-module. Then

\begin{equation}\label{eq:cohomology_with_local_conditions}
H^1_{\rm loc}(\Gamma,M) := \ker\left( H^1(\Gamma,M) \rar \prod_{C\subseteq \Gamma} H^1(C,M) \right),
\end{equation}
where the product is taken over all cyclic subgroups $C \subseteq \Gamma$, and the map is the product of restriction maps. 

\begin{lm}\label{lm:local_conditions_only_psubgroups}
Let $\Gamma,M$ be as above, and suppose that $M$ is $p$-primary for a rational prime $p$. Then $H^1_{\rm loc}(\Gamma,M) = \ker\left( H^1(\Gamma,M) \rar \prod_{C\subseteq \Gamma} H^1(C,M) \right)$, where the product is taken over all cyclic $p$-subgroups $C \subseteq \Gamma$.
\end{lm}
\begin{proof}
This follows from the fact that for any finite group $H$  and any $p$-primary module $M$, $H^1(H,M) \rar H^1(H_p,M)$ is injective, where $H_p$ is a $p$-Sylow subgroup of $H$ (see \cite[(1.6.10)]{NSW}). 
\end{proof}


Let $K_n^{\rm ab}(p)/k$ be the maximal abelian pro-$p$ extension of $K_n$. The following generalization of \cite[Prop.~2.1]{DvornicichZ_01} shows that it suffices to test local-global divisibility by $p^n$ at a $p$-stable set of primes:  

\begin{prop}\label{prop:p_stable_sets_test_p_divisibility}
Let $p$ be a rational prime and $n \geq 1$ an integer. Let $A/k$ be a commutative algebraic group.  Let $S$ be a set of primes of $k$, which is $p$-stable for  $K_n^{\rm ab}(p)/k$ with $p$-stabilizing field $k$. Assume that $H^1_{\rm loc}(\Gal_{K_n/k}, A[p^n]) = 0$. Then the following holds. Let $P \in A(k)$, such that for all $\fp \in S$, there is some $Q_\fp \in A(k_\fp)$ with $P_{\fp} = p^n Q_{\fp}$. Then there is some $Q \in A(k)$ with $P = p^n Q$.

\end{prop}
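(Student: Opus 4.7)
The plan is to follow the cohomological strategy of \cite{DvornicichZ_01}, using the $p$-stability of $S$ in place of the density-one assumption. I fix a point $D \in A(\overline{\bQ})$ with $p^n D = P$. For every $\sigma$ in the absolute Galois group of $k$, the element $\sigma D - D$ is $p^n$-torsion, hence lies in $A[p^n] \subseteq A(K_n)$. In particular $K_n(D)/k$ is Galois, and $K_n(D)/K_n$ is abelian of exponent dividing $p^n$, so $K_n(D) \subseteq K_n^{\rm ab}(p)$. Write $G := \Gal_{K_n(D)/k}$, $H := \Gal_{K_n(D)/K_n}$, $\Gamma := \Gal_{K_n/k}$, and consider the cocycle $c \in H^1(G, A[p^n])$ defined by $c(\sigma) := \sigma D - D$.

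The local hypothesis produces, for each $\fp \in S$, some $Q_\fp \in A(k_\fp)$ with $p^n Q_\fp = P$, so $D - Q_\fp \in A[p^n]$; on any decomposition subgroup $G_\fp \subseteq G$ at a place above $\fp$, one computes $c(\sigma) = \sigma(D - Q_\fp) - (D - Q_\fp)$, which shows that $c|_{G_\fp}$ is a coboundary. Because $K_n(D)/k$ is a finite subextension of $K_n^{\rm ab}(p)/k$ and $S$ is $p$-stable for the latter with $p$-stabilizing field $k$, $S$ is also $p$-stable for $K_n(D)/k$ with $p$-stabilizing field $k$. Thus Lemma \ref{lm:main_lemma_about_stable_sets} realises every cyclic $p$-subgroup of $G$ as some $G_\fp$ with $\fp \in S$, and invoking Lemma \ref{lm:local_conditions_only_psubgroups} gives $c \in H^1_{\rm loc}(G, A[p^n])$.

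The crucial step is then to deduce $H = 1$. Since $A[p^n] \subseteq A(K_n)$, the group $H$ acts trivially on $A[p^n]$, and $c|_H$ is the injective homomorphism $H \hookrightarrow A[p^n]$, $\sigma \mapsto \sigma D - D$ (injective because $\sigma D = D$ together with $\sigma|_{K_n} = \mathrm{id}$ forces $\sigma = 1$). Should $H$ be nontrivial, any $h \neq 1$ in $H$ would generate a cyclic $p$-subgroup $\langle h \rangle \leq G$ on which $c$ sends $h \mapsto hD - D \neq 0$; but the trivial action of $h$ on $A[p^n]$ means the only coboundary on $\langle h \rangle$ is zero, contradicting $c \in H^1_{\rm loc}(G, A[p^n])$. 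Hence $H = 1$, $D \in A(K_n)$, and $c$ identifies with a class in $H^1_{\rm loc}(\Gamma, A[p^n])$, which vanishes by hypothesis. Picking $\Sigma \in A[p^n]$ with $\sigma D - D = \sigma\Sigma - \Sigma$ for all $\sigma \in \Gamma$, one finds $Q := D - \Sigma \in A(k)$ with $p^n Q = P$.

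The principal difficulty is the descent $H = 1$: it turns on exploiting the rigidity of the restriction $c|_H$, forced to be the inclusion into $A[p^n]$, against the supply of cyclic $p$-subgroups provided by the $p$-stability of $S$. This is precisely the point where restricting the local-triviality condition from all cyclic subgroups to only cyclic $p$-subgroups (Lemma \ref{lm:local_conditions_only_psubgroups}) pays off. After this reduction the argument collapses to the classical one where the vanishing of $H^1_{\rm loc}(\Gamma, A[p^n])$ is sufficient.
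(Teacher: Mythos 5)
Your proof is correct, and it follows the paper's strategy up to and including the step where $[c]\in H^1_{\rm loc}(\Gal_{F/k},A[p^n])$ for $F=K_n(D)$: same cocycle $c(\sigma)=\sigma D-D$, same observation that $F\subseteq K_n^{\rm ab}(p)$, same use of Lemma \ref{lm:main_lemma_about_stable_sets} to realize every cyclic $p$-subgroup of $\Gal_{F/k}$ as a decomposition group at a prime of $S$, and of Lemma \ref{lm:local_conditions_only_psubgroups} to pass from cyclic $p$-subgroups to all cyclic subgroups. You diverge at the concluding step. The paper cites the proof of \cite[Prop.~2.1]{DvornicichZ_01} for the fact that inflation along $\Gal_{F/k}\rar\Gal_{K_n/k}$ induces an isomorphism $H^1_{\rm loc}(\Gal_{K_n/k},A[p^n])\simeq H^1_{\rm loc}(\Gal_{F/k},A[p^n])$ and concludes from the vanishing of the source. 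You instead prove directly that $H=\Gal_{F/K_n}$ is trivial: since $H$ acts trivially on $A[p^n]$, the restriction $c|_H$ is an injective homomorphism into $A[p^n]$, so every nontrivial $h\in H$ has $p$-power order and generates a cyclic $p$-subgroup on which $c$ is a nonzero homomorphism, contradicting the local triviality of $[c]$ (coboundaries vanish for the trivial action). This is sound, self-contained, and yields $D\in A(K_n)$ -- i.e.\ the paper's Corollary \ref{cor:K_n-rational_divisor} -- as a byproduct. The mathematical content is essentially the same (the surjectivity half of the Dvornicich--Zannier isomorphism rests on the same computation on $H$), so I would describe your argument as the paper's approach with a more explicit and slightly stronger ending rather than a genuinely different route.
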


\begin{proof}
Let $D \in A(\overline \bQ)$ be a point with $p^n D = P$ and let $\ell=k(D)$ be the corresponding extension of $k$. Put $F = K_n\cdot \ell$. Then $F/k$ is Galois and $\ell/k$ is cyclic of $p$-power degree, so in particular $F \subseteq K_n(p)^{\rm ab}$. One can define a $1$-cocycle $c \colon \Gal_{F/k} \rar A[p^n]$, by $c(\sigma):=\sigma(D)-D$, for all $\sigma\in \Gal_{F/k}$. 
Its image $[c] \in H^1(\Gal_{F/k}, A[p^n])$ is zero if and only if $P = p^n D'$ for some $D' \in A(k)$, see \cite[p.~320]{DvornicichZ_01}. Moreover, as by assumpion $P$ is locally $p^n$-divisible at any $\fp \in S$, the same argument with cocycles
show that the restriction of $[c]$ to $H^1(C,A[p^n])$ is zero, where $C \subseteq \Gal_{F/k}$ is the decomposition subgroup of any prime in $S$. Now, as $F \subseteq K_n(p)^{\ab}$, by Lemma \ref{lm:main_lemma_about_stable_sets} the set of decomposition subgroups in $\Gal_{F/k}$ at primes in $S$ contains the set of all cyclic $p$-subgroups of $\Gal_{F/k}$, and so by Lemma \ref{lm:local_conditions_only_psubgroups} we deduce $[c] \in H^1_{\rm loc}(\Gal_{F/k},A[p^n])$. To finish the proof of Proposition \ref{prop:p_stable_sets_test_p_divisibility} it thus remains to show that the restriction via $\Gal_{F/k} \tar \Gal_{K_n/k}$ induces an isomorphism $H^1_{\rm loc}(\Gal_{K_n/k},A[p^n]) \stackrel{\sim}{\rar} H^1_{\rm loc}(\Gal_{F/k},A[p^n])$. But this is done in the proof of \cite[Prop. 2.1]{DvornicichZ_01}.
\end{proof}

\begin{cor}\label{cor:K_n-rational_divisor}
Let $p$ be a rational prime and $n \geq 1$ an integer. Let $A/k$ be a commutative algebraic group. Let $S$ be a set of primes of $k$, which is $p$-stable for  $K_n^{\rm ab}(p)/k$ with $p$-stabilizing field $k$. Assume that $P \in A(k)$, such that for all $\fp \in S$, there is some $Q_\fp \in A(k_\fp)$ with $P_{\fp} = p^n Q_{\fp}$. Then 
$D\in A(K_n)$, for all $D$ such that $P=p^nD$.
\end{cor}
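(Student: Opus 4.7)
The plan is to refine the cocycle argument from the proof of Proposition \ref{prop:p_stable_sets_test_p_divisibility}, this time tracking the precise extension over which a preimage of $P$ is defined rather than asking only for global divisibility. Observe first that it is enough to exhibit \emph{one} $D \in A(\overline \bQ)$ with $p^n D = P$ lying in $A(K_n)$: any other such preimage differs from $D$ by an element of $A[p^n] \subseteq A(K_n)$, hence also lies in $A(K_n)$.

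First, I would fix any $D \in A(\overline \bQ)$ with $p^n D = P$, set $\ell = k(D)$ and $F = K_n \cdot \ell$, and form the $1$-cocycle $c \colon \Gal_{F/k} \rar A[p^n]$, $c(\sigma) = \sigma(D) - D$. Exactly as in the proof of Proposition \ref{prop:p_stable_sets_test_p_divisibility}, the local $p^n$-divisibility at all $\fp \in S$, combined with Lemmas \ref{lm:main_lemma_about_stable_sets} and \ref{lm:local_conditions_only_psubgroups}, produces $[c] \in H^1_{\rm loc}(\Gal_{F/k}, A[p^n])$. Crucially, this step does \emph{not} invoke the hypothesis $H^1_{\rm loc}(\Gal_{K_n/k}, A[p^n]) = 0$, which is why the corollary can dispense with it.

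Next, I would invoke the isomorphism $H^1_{\rm loc}(\Gal_{K_n/k}, A[p^n]) \stackrel{\sim}{\rar} H^1_{\rm loc}(\Gal_{F/k}, A[p^n])$, induced by inflation along $\Gal_{F/k} \tar \Gal_{K_n/k}$ and recalled at the end of the proof of Proposition \ref{prop:p_stable_sets_test_p_divisibility}, to lift $[c]$ to a class represented by a cocycle $c'$ which is inflated from $\Gal_{K_n/k}$. The equality $[c] = [c']$ in $H^1(\Gal_{F/k}, A[p^n])$ provides $a \in A[p^n]$ with $c(\sigma) - c'(\sigma) = \sigma(a) - a$ for all $\sigma \in \Gal_{F/k}$. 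Replacing $D$ by $D' := D - a$ preserves $p^n D' = P$, while the associated cocycle $\sigma \mapsto \sigma(D') - D'$ becomes exactly $c'$. Since $c'$ is inflated from $\Gal_{K_n/k}$, it vanishes on $\Gal_{F/K_n}$, hence $D'$ is fixed by $\Gal_{F/K_n}$, and Galois descent gives $D' \in A(F)^{\Gal_{F/K_n}} = A(K_n)$.

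Finally, since $a \in A[p^n] \subseteq A(K_n)$ by the very definition of $K_n$, we conclude $D = D' + a \in A(K_n)$, and the opening remark handles all other preimages. The main subtlety is the bookkeeping around the inflation step: one must remember that $[c] = [c']$ is an equality of cohomology classes rather than of cocycles, and it is the adjustment of the preimage $D$ by the resulting $p^n$-torsion element $a$ that turns the cohomological statement into the concrete statement $D \in A(K_n)$.
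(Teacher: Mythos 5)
Your proof is correct, and it does not use the vanishing hypothesis $H^1_{\rm loc}(\Gal_{K_n/k},A[p^n])=0$, which is exactly the point of the corollary. However, it takes a detour compared with the argument the paper cites (the proof of Corollary~2.3 of Dvornicich--Zannier, with the density-one set replaced by $S$). The cited argument is shorter and works with a single subgroup: $\Gal_{F/K_n}$ is a cyclic group of $p$-power order (being isomorphic to a subgroup of the cyclic $p$-group $\Gal_{\ell/k}$), so by Lemma~\ref{lm:main_lemma_about_stable_sets} it is the decomposition group of some $\fp\in S$, and local $p^n$-divisibility at $\fp$ forces the restriction of $[c]$ to $H^1(\Gal_{F/K_n},A[p^n])$ to vanish. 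Since $\Gal_{F/K_n}$ acts trivially on $A[p^n]$, this $H^1$ equals ${\rm Hom}(\Gal_{F/K_n},A[p^n])$, so vanishing of the class already means vanishing of the cocycle itself; hence $\sigma(D)=D$ for all $\sigma\in\Gal_{F/K_n}$ and $D\in A(K_n)$ with no adjustment needed. Your route instead establishes $[c]\in H^1_{\rm loc}(\Gal_{F/k},A[p^n])$ for the full group, invokes the inflation isomorphism $H^1_{\rm loc}(\Gal_{K_n/k},A[p^n])\stackrel{\sim}{\rar}H^1_{\rm loc}(\Gal_{F/k},A[p^n])$ as a black box from \cite[Prop.~2.1]{DvornicichZ_01}, and then normalizes $D$ by a torsion point $a\in A[p^n]$. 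This is all valid (and your bookkeeping around $[c]=[c']$ versus $c=c'$, and the final step $D=D'+a\in A(K_n)$, is handled correctly), but the extra machinery is not needed: the direct restriction argument gives the conclusion for every preimage $D$ at once, which also renders your opening reduction superfluous.
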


\begin{proof}
One can apply the same argument used in the proof of \cite[Corollary 2.3]{DvornicichZ_01}, 
by substituting  the set of density one with the set $S$.
\end{proof}

We denote by $G_k$ the absolute Galois group $\Gal_{\bar{\bQ}/k}$ and by $G_{k_{\mfp}}$
the absolute Galois group $\Gal_{\bar{k}_{\mfp}/k_{\mfp}}$, where $\bar{k}_{\mfp}$ is an
algebraic closure of $k_\mfp$. 
Let $S$ be a subset of primes of $k$ which is $p$-stable at $K_n^{\rm ab}(p)/k$ with $p$-stabilizing field $k$. We define a modified Tate-Shafarevich
group related to $S$ and we will prove that it is isomorphic to $H^1_{\loc}(\Gal_{K_n/k},A[p^n])$.

\begin{Def} Let $A$ be a comutative algebraic group defined over a number field $k$ and let $S$ be a set of primes of $k$, unramified in $K_n$, which is $p$-stable at $K_n^{\rm ab}(p)/k$ with $p$-stabilizing field $k$. We denote by $\Sha_{S}(k,A[p^n])$ the subgroup of $H^1(G_k,A[p^n])$ formed by the
classes of the cocycles vanishing in $k_{\mfp}$, for all $\mfp\in S$, i.e.\

\begin{equation}\label{Sha}
\Sha_{S}(k,A[p^n]):=\bigcap_{\mfp\in S} \ker ( H^1(G_k,A[p^n])\xrightarrow{\makebox[1cm]{{\small $res_\mfp$}}} H^1(G_{k_{\mfp}},A[p^n])) \end{equation}
\end{Def}

Notice that in Equation \eqref{Sha} by replacing $S$ with the set of primes of $k$ one gets the definition of the classical
Tate-Shafarevich group $\Sha(k,A[p^n])$.
We are going to prove that $H^1_{\rm loc}(G, A[p^n])\simeq \Sha_{S}(k, A[p^n])$.

\begin{prop}\label{thm:p_stable_sets_test_p_divisibility}
Let $p$ be a rational prime and $n \geq 1$ an integer. Let $A/k$ be a commutative algebraic group. Let $S$ be a set of primes of $k$, unramified in $K_n$, which is $p$-stable for  $K_n^{\rm ab}(p)/k$ with $p$-stabilizing field $k$. Then $H^1_{\rm loc}(G, A[p^n]) \simeq \Sha_{S}(k,A[p^n])$.
\end{prop}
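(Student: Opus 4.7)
The plan is to show that the natural inflation $H^1(\Gal_{K_n/k},A[p^n]) \hookrightarrow H^1(G_k,A[p^n])$---injective because $G_{K_n}$ acts trivially on $A[p^n]$ by definition of $K_n$---restricts to a bijection from $H^1_{\rm loc}(\Gal_{K_n/k},A[p^n])$ onto $\Sha_S(k,A[p^n])$.

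For the easy direction, let $[c'']\in H^1_{\rm loc}(\Gal_{K_n/k},A[p^n])$ and fix $\fp\in S$. Since $\fp$ is unramified in $K_n$, the decomposition group $D_\fp(K_n/k)\subseteq \Gal_{K_n/k}$ is cyclic, generated by Frobenius, and equals the image of $G_{k_\fp}$ under the natural surjection. The restriction at $\fp$ of the inflation of $[c'']$ thus factors through $H^1(D_\fp(K_n/k),A[p^n])$, where $[c'']$ vanishes by assumption. Hence inflation sends $H^1_{\rm loc}$ into $\Sha_S$, and injectivity is preserved.

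For surjectivity, take $[c]\in \Sha_S(k,A[p^n])$ and let $\varphi$ be its restriction to $G_{K_n}$, which is a $\Gal_{K_n/k}$-equivariant continuous homomorphism $G_{K_n}\to A[p^n]$ since $G_{K_n}$ acts trivially on $A[p^n]$. Let $F\subseteq \overline{\bQ}$ be the fixed field of $\ker\varphi$. Then $F/k$ is Galois, $F/K_n$ is finite abelian of $p$-power order, and $F\subseteq K_n^{\rm ab}(p)$. Inflation--restriction for $F/k$ realizes $[c]$ as the inflation of some $[c_F]\in H^1(\Gal_{F/k},A[p^n])$, and the assumption $\mathrm{res}_\fp([c])=0$ for $\fp\in S$ translates, via inflation--restriction at $\fp$, to $[c_F]|_{D_\fp(F/k)}=0$. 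The density inequalities defining $p$-stability of $S$ for $K_n^{\rm ab}(p)/k$ with stabilizing field $k$ continue to hold for all finite subextensions of $F/k$ (since $F\subseteq K_n^{\rm ab}(p)$), so Lemma~\ref{lm:main_lemma_about_stable_sets} applies to $F/k$: every cyclic $p$-subgroup of $\Gal_{F/k}$ is the decomposition subgroup of some prime in $S$, and therefore annihilates $[c_F]$.

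It remains to descend $[c_F]$ to $\Gal_{K_n/k}$. Since $\Gal_{F/K_n}$ is a finite abelian $p$-group, the homomorphism $[c_F]|_{\Gal_{F/K_n}}\in \mathrm{Hom}(\Gal_{F/K_n},A[p^n])$ vanishes once it vanishes on each cyclic subgroup of $\Gal_{F/K_n}$; but such subgroups are cyclic $p$-subgroups of $\Gal_{F/k}$, and so are killed by $[c_F]$ by the previous step. Inflation--restriction for $K_n\subseteq F$ then produces $[c'']\in H^1(\Gal_{K_n/k},A[p^n])$ inflating to $[c]$. By Lemma~\ref{lm:local_conditions_only_psubgroups} it suffices to verify $[c'']|_C=0$ for cyclic $p$-subgroups $C\subseteq \Gal_{K_n/k}$; Lemma~\ref{lm:main_lemma_about_stable_sets} applied now to $K_n/k$ (which inherits $p$-stability) gives $C=D_\fp(K_n/k)$ for some $\fp\in S$, and then $\mathrm{res}_\fp([c])=0$ forces $[c'']|_C=0$ via the same factorization as in the easy direction. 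The main technical obstacle is the surjectivity paragraph: one must realize $[c]$ at finite level by constructing the auxiliary Galois subextension $F\subseteq K_n^{\rm ab}(p)$, verify that $p$-stability descends to $F/k$, and exploit the pro-$p$ abelian structure of $\Gal_{K_n^{\rm ab}(p)/K_n}$ to convert ``every cyclic $p$-subgroup of $\Gal_{F/k}$ kills $[c_F]$'' into ``$\Gal_{F/K_n}$ kills $[c_F]$''.
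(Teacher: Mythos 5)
Your proof is correct and follows essentially the same route as the paper: both rest on the global and local inflation--restriction sequences for $K_n/k$ together with Lemma~\ref{lm:main_lemma_about_stable_sets}, the paper packaging the argument as a diagram chase whose key point is the injectivity of $H^1(G_{K_n},A[p^n])\rightarrow\prod_{w}H^1(G_{K_{n,w}},A[p^n])$ on restrictions of classes from $\Sha_S$. Your surjectivity paragraph, which introduces the finite extension $F/K_n$ cut out by $\ker\varphi$ and applies Lemma~\ref{lm:main_lemma_about_stable_sets} to $\Gal_{F/k}$, is precisely an unpacked (and somewhat more careful) version of that injectivity claim, which the paper asserts for the infinite group $G_{K_n}$ without explicitly descending to a finite quotient.
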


\begin{proof}
Let $S_{K_n}$ denote the set of primes $w_{\mfp}$ of $K_n$ extending the primes $\mfp$ in $S$ and by $K_{n,w_{\mfp}}$ we denote
the completion of $K_n$ at the place $w_{\mfp}$. Let $G_{\mfp}:= \Gal_{K_{n,w_{\mfp}}/k_{\mfp}}$ and consider the following diagram given by the inflation restrictions exact sequence

\[
\begin{array}{ccccccc}
0 & \xrightarrow{\hspace{0.4cm}} & H^1(G,A[p^n])  & \xrightarrow{\hspace{0.1cm}inf\hspace{0.1cm}} &  H^1(G_k,A[p^n]) &   \xrightarrow{\hspace{0.1cm}res\hspace{0.1cm}} & H^1(G_{K_n},A[p^n]) \\
& &\hspace{0.8cm} \Big\downarrow {\prod res{_\mfp}}  & &\hspace{0.8cm}\Big\downarrow {\prod res{_\mfp}} & &\hspace{0.8cm}\Big\downarrow {\prod res_{w_\mfp}} \\
0 & \xrightarrow{\hspace{0.4cm}} &\prod_{\mfp\in S} H^1(G{_\mfp},A[p^n])  & \xrightarrow{\hspace{0.1cm}inf\hspace{0.1cm}} &  \prod_{\mfp\in S}H^1(G_{k{_\mfp}},A[p^n]) &  \xrightarrow{\hspace{0.1cm}res\hspace{0.1cm}} &\prod_{{w_\mfp}\in S_{K_n}} H^1(G_{K_{n,w_\mfp}},A[p^n]) \\
\end{array} 
\]

\bigskip\noindent  The kernel of the vertical map on the left is  $H^1_{\loc}(G,A[p^n])$ by assumption on $S$ and by Lemma \ref{lm:main_lemma_about_stable_sets} and the kernel of the
central vertical map is $\Sha_{S}(k,A[p^n])$. The
vertical map on the right is injective because of $G_{K_n}$ acting trivially on $A[p^n]$ and because of $G_{K_{n,w}}$
varying over all cyclic subgroups of $G_{K_n}$ as $w$ varies in $S_{K_n}$ by Lemma \ref{lm:main_lemma_about_stable_sets}. 
%
%
%
Therefore $H^1_{\loc}(G,A[p^n])\simeq \Sha_{S}(k,A[p^n])$. \qedhere
\end{proof}

\medskip

At cost of slightly strengthening the assumption on $S$, we can eliminate the dependence on the particular algebraic group $A$ in Proposition \ref{prop:p_stable_sets_test_p_divisibility}. For an integer $M > 0$, consider the class $\fC_{\leq M}(k)$ of all commutative algebraic groups over $k$, such that $\dim_{\bF_p}\#A[p](\overline \bQ) \leq M$. Moreover, for a set $\bP \subseteq \Sigma_\bQ$ of rational primes, let $k(\bP)$ be the compositum of all finite extensions of $k$ whose orders are products of elements of $\bP$.

\begin{cor}\label{cor:p_stable_sets_test_p_divisibility}
Let $p$ be a rational prime and let $M>0$. Let $S$ be a set of primes of $k$. Assume that $S$ is $p$-stable with $p$-stabilizing field $k$ for the extension $k(\bP(M))/k$, where $\bP(M)$ is the set of all prime divisors of $\#\GL_{M'}(\bF_p)$ for all $M' \leq M$. Then for any $A \in \fC_{\leq M}(k)$, such that $H^1_{\rm loc}(\Gal_{K_n/k}, A[p^n]) = 0$, the following holds.

Let $P \in A(k)$, such that for all $\fp \in S$, there is some $Q_\fp \in A(k_\fp)$ with $P_{\fp} = p^n Q_{\fp}$. Then there is some $Q \in A(k)$ with $P = p^n Q$.
\end{cor}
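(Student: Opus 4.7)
The plan is to deduce Corollary \ref{cor:p_stable_sets_test_p_divisibility} from Proposition \ref{prop:p_stable_sets_test_p_divisibility} applied to the given algebraic group $A$. Concretely, I will verify that under the hypotheses of the corollary, $S$ is automatically $p$-stable for $K_n^{\rm ab}(p)/k$ with $p$-stabilizing field $k$; since the vanishing $H^1_{\rm loc}(\Gal_{K_n/k}, A[p^n]) = 0$ is already part of the hypothesis, the proposition then applies verbatim and yields global $p^n$-divisibility of $P$.

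The heart of the argument is the containment $K_n^{\rm ab}(p) \subseteq k(\bP(M))$. Setting $M' := \dim_{\bF_p} A[p](\overline{\bQ}) \leq M$, the faithful Galois action on $A[p]$ embeds $\Gal_{K_1/k}$ into $\Aut_{\bF_p}(A[p]) \cong \GL_{M'}(\bF_p)$, so the prime divisors of $[K_1:k]$ lie in $\bP(M)$. Next, the kernel of the reduction map $\Aut(A[p^n]) \rar \Aut(A[p])$ consists of automorphisms of the form $1 + p\phi$, which have $p$-power order; hence $\Gal_{K_n/K_1}$ is a $p$-group. Finally, $\Gal_{K_n^{\rm ab}(p)/K_n}$ is pro-$p$ by construction. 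Since $p$ itself already belongs to $\bP(M)$ (for instance via the factor $p^{\binom{M'}{2}}$ in the standard formula for $\#\GL_{M'}(\bF_p)$ once $M' \geq 2$), every finite subextension of $K_n^{\rm ab}(p)/k$ has degree whose prime divisors lie in $\bP(M)$, which is exactly the containment claimed.

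With the containment in hand, the $p$-stability transfers mechanically from the ambient extension to the subextension: a datum $(T \subseteq S, a)$ satisfying $p a > \delta_L(T_L) \geq a$ for every finite subextension $k(\bP(M))/L/k$ satisfies the same inequality a fortiori for every finite subextension $K_n^{\rm ab}(p)/L/k$, since such $L$ are in particular finite subextensions of $k(\bP(M))/k$. Thus $S$ is $p$-stable for $K_n^{\rm ab}(p)/k$ with $p$-stabilizing field $k$, and Proposition \ref{prop:p_stable_sets_test_p_divisibility} closes the argument.

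The main obstacle I anticipate is purely arithmetic bookkeeping to establish the containment $K_n^{\rm ab}(p) \subseteq k(\bP(M))$ — in particular verifying that $p$ is genuinely a prime divisor of $\#\GL_{M'}(\bF_p)$ for the relevant values of $M'$. Everything after that reduction is formal.
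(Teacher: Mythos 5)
Your proof is correct and follows essentially the same route as the paper's: reduce to Proposition \ref{prop:p_stable_sets_test_p_divisibility} by establishing $K_n^{\rm ab}(p) \subseteq k(\bP(M))$ through the chain $\Gal_{K_1/k} \hookrightarrow \GL_{M'}(\bF_p)$, $K_n/K_1$ a $p$-extension, $\Gal_{K_n^{\rm ab}(p)/K_n}$ pro-$p$, and $p \in \bP(M)$ — you merely supply details the paper leaves implicit. The one shared caveat is the degenerate case $M=1$, where $\#\GL_1(\bF_p) = p-1$ is prime to $p$ so the assertion $p \in \bP(M)$ needs a separate remark; you half-flag this with your ``once $M' \geq 2$'' aside, and the paper glosses over it entirely.
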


For example, if one restricts further to the class of elliptic curves one can take $M = 2$ and $\bP$ the set of prime divisors of $p,p\pm 1$. For
abelian varieties of dimension $\leq g$, one can take $M=2g$.

\begin{proof}[Proof of Corollary \ref{cor:p_stable_sets_test_p_divisibility}]
We have to show that for a particular $A \in \fC_M(k)$, the assumptions of Proposition \ref{prop:p_stable_sets_test_p_divisibility} hold, i.e., that $K_n(p)^{\ab} \subseteq k(\bP(M))$. As $p \in \bP(M)$, it suffices to show that $K_n \subseteq k(\bP(M))$. Also as $K_n/K_1$ is a $p$-extension, it suffices to show that $K_1/k \subseteq k(\bP(M))$. But $\Gal_{K_1/k} \subseteq  \GL(A[p])$, which is a subgroup of $\GL_{M'}(\bF_p)$ for some $M'\leq M$.
\end{proof}

Strengthening assumptions on $S$ even further, Corollary \ref{cor:p_stable_sets_test_p_divisibility} immediately gives the following:

\begin{cor}\label{cor:for_absolutely_stable_S} 
Fix a prime $p$. Let $S$ be a set of primes of $k$, which is persistent for $\overline \bQ/k$ with persisting field $k$. For any commutative algebraic group $A/k$ and any $n > 0$, if $H^1_{\rm loc}(\Gal_{K_n/k},A[p^n]) = 0$, the following holds.

Let $P \in A(k)$, such that for all $\fp \in S$, there is some $Q_\fp \in A(k_\fp)$ with $P_{\fp} = p^n Q_{\fp}$. Then there is some $Q \in A(k)$ with $P = p^n Q$.
\end{cor}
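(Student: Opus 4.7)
The plan is to deduce this corollary directly from Corollary \ref{cor:p_stable_sets_test_p_divisibility} by observing that persistence for $\overline{\bQ}/k$ is a strong enough hypothesis to supply the stability condition on $S$ needed there for every choice of $M$. The only content of the proof is unwinding Definition \ref{def:recall_stable_sets} to see that persistence is inherited by subextensions and is stronger than $\lambda$-stability for any $\lambda > 1$.

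First I would fix a commutative algebraic group $A/k$ and a point $P \in A(k)$ satisfying the local $p^n$-divisibility hypothesis on $S$, and set $M := \dim_{\bF_p} A[p](\overline{\bQ})$ so that $A \in \fC_{\leq M}(k)$. Let $\bP(M)$ be the set of prime divisors of $\#\GL_{M'}(\bF_p)$ for $M' \leq M$, so that $k(\bP(M)) \subseteq \overline{\bQ}$ is a subextension of $\overline{\bQ}/k$. Next, by hypothesis $S$ is persistent for $\overline{\bQ}/k$ with persisting field $k$, that is, there exist $T \subseteq S$ and $a \in (0,1]$ with $\delta_L(T_L) = a$ for every finite subextension $\overline{\bQ}/L/k$. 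Since every finite subextension of $k(\bP(M))/k$ is a fortiori a finite subextension of $\overline{\bQ}/k$, the same $T$ and $a$ witness that $S$ is persistent, hence $\lambda$-stable for every $\lambda > 1$, for $k(\bP(M))/k$ with $p$-stabilizing field $k$.

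With this observation, all hypotheses of Corollary \ref{cor:p_stable_sets_test_p_divisibility} are verified for our $A$: the vanishing of $H^1_{\rm loc}(\Gal_{K_n/k}, A[p^n])$ is assumed, and the stability condition on $S$ has just been established. Applying Corollary \ref{cor:p_stable_sets_test_p_divisibility} to $P$ then yields $Q \in A(k)$ with $P = p^n Q$, which is exactly the desired conclusion. No step is really a substantial obstacle; the only thing to watch is the bookkeeping that $\bP(M)$ and $M$ depend on $A$, which is harmless because the persistence hypothesis on $S$ is uniform in the subextension.
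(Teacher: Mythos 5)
Your proposal is correct and matches the paper's approach: the paper derives this corollary as an immediate consequence of Corollary \ref{cor:p_stable_sets_test_p_divisibility}, and you have simply made explicit the routine verification that persistence for $\overline \bQ/k$ with persisting field $k$ restricts to $p$-stability (with $p$-stabilizing field $k$) for the subextension $k(\bP(M))/k$, for the $M$ determined by the given $A$.
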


We show in Proposition \ref{prop:absolutely_persistent_set} below that sets $S$ satisfying the requirements of the corollary exist in abundance.
Using this along with existing results on the original form of Problem \ref{problem:DZ}, Corollary \ref{cor:for_absolutely_stable_S} specializes to the case of elliptic curves:

\begin{proof}[Proof of Theorem \ref{thm:small_test_set}]
Pick a set $S \subseteq \Sigma_\bQ$ as constructed in Proposition \ref{prop:absolutely_persistent_set}. Let $E/k$ be an elliptic curve. By Corollary \ref{cor:for_absolutely_stable_S} it suffices to show that  $H^1_{\rm loc}(\Gal_{K_p/k},E[p]) = 0$ for all $p$, resp. that $H^1_{\rm loc}(\Gal_{K{p^n}/k},E[p^n]) = 0$ for all $p>B(d)$ and all $n > 1$. The first case is easy, as was observed in \cite[beginning of \S 3]{DvornicichZ_01}: $\Gal_{K_p/k}$ is then a subgroup of $\GL_2(\bF_p)$, which implies that the $p$-Sylow subgroup of $\Gal_{K_p/k}$ is cyclic and so $H^1_{\rm loc}(\Gal_{K_p/k},E[p]) = 0$. In the second case we may apply \cite[Theorem 1' (on p. 8)]{PaladinoRV_12} (see also Corollary 2 of \emph{loc.\ cit.}) and \cite{PaladinoRV_14}. 
\end{proof}

\section{New examples of stable sets} \label{sec:stable_set}

It is easy to give examples of stable sets $S$ with arbitrary small density in the whole tower $\cL/k$, when $\cL$ is some reasonably small subextension of $\overline\bQ/k$. However, those examples will often not be stable for other towers $\cL'/k$. Consider, for example, $S = P_{\ell/k}(\sigma)$ as in \eqref{eq:chebotarev_set}. If $\sigma = 1$, then $S$ will be stable --even persistent-- for any extension $\cL/k$, but if $\cL \supseteq \ell$, the persisting field is $\ell$ and $\delta_\ell(S_\ell) = 1$, that is $S_\ell$ eventually becomes ``big''. On the other hand, if $\sigma \neq 1$, then $\delta_{\ell'}(S_{\ell'}) = 0$ for any finite $\ell'/\ell$, and hence $S$ is not stable for $\cL/k$ whenever $\cL \supseteq \ell$. With this in mind, we now produce now many examples of sets persistent for $\overline\bQ/k$ with persisting field $k$ and arbitrary small positive density.

\begin{prop}\label{prop:absolutely_persistent_set}
For any number field $k$ and any $\varepsilon > 0$, there exists a set $S$ of primes of $k$ satisfying 
\[ 
0 < \delta_k(S) = \delta_\ell(S_\ell) < \varepsilon
\] 
for all finite extensions $\ell/k$. In particular, $S$ is persistent for $\overline\bQ/k$ with persisting field $k$.
\end{prop}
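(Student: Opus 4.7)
The plan is to construct $S$ as the preimage of a suitable set $\Pi \subseteq \Sigma_\bQ$ under the natural map $\Sigma_k \rar \Sigma_\bQ$; concretely, $S = \{\fp \in \Sigma_k \colon \fp \cap \bZ \in \Pi\}$. I require $\Pi$ to have Dirichlet density some $\alpha \in (0,\varepsilon)$ and, further, to be \emph{Chebotarev-uniform}, meaning: for every finite Galois extension $F/\bQ$ with group $G = \Gal_{F/\bQ}$ and every conjugacy class $C \subseteq G$,
$$\delta_\bQ(\Pi \cap P_{F/\bQ}(C)) = \alpha \cdot \frac{|C|}{|G|}.$$

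Granting such a $\Pi$, I compute $\delta_\ell(S_\ell)$ for an arbitrary finite $\ell/k$. Since only primes $\mathfrak P$ of $\ell$ of residue degree $1$ over $\bQ$ contribute to Dirichlet density, one has
$$\delta_\ell(S_\ell) = \lim_{s \to 1^+} \frac{\sum_{p \in \Pi} r_1(p,\ell)\, p^{-s}}{-\log(s-1)},$$
where $r_1(p,\ell)$ counts the primes $\mathfrak P$ of $\ell$ above $p$ with residue degree $1$ over $\bQ$. The weight $r_1(p,\ell)$ is a class function of $\Frob_p$ in $\Gal_{\tilde\ell/\bQ}$, where $\tilde\ell$ is the Galois closure of $\ell/\bQ$, and a short Burnside-type count gives the identity $\sum_{g \in \Gal_{\tilde\ell/\bQ}} r_1(g,\ell) = [\tilde\ell:\bQ]$ (which is just $\delta_\ell(\Sigma_\ell) = 1$ rewritten). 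Combining this identity with the Chebotarev-uniform property of $\Pi$ yields $\delta_\ell(S_\ell) = \alpha$, independent of $\ell$. In particular $\delta_k(S) = \alpha$ and hence $\delta_k(S) = \delta_\ell(S_\ell)$ for every finite $\ell/k$; taking $T = S$ in Definition \ref{def:recall_stable_sets} shows $S$ is persistent for $\overline\bQ/k$ with persisting field $k$.

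It then remains to construct a Chebotarev-uniform $\Pi$ of density $\alpha$. One option is a diagonal construction: enumerate the finite Galois extensions of $\bQ$ as a cofinal tower $\bQ = F_0 \subsetneq F_1 \subsetneq F_2 \subsetneq \cdots$ and build $\Pi$ inductively, allocating primes (above successively larger thresholds) to $\Pi$ in proportion $\alpha$ within each Frobenius conjugacy class of $\Gal_{F_n/\bQ}$; by Chebotarev each class contains infinitely many primes, so the allocation is always feasible. An analytic alternative is to set $\Pi = \{p : \{p\beta\} \in [0,\alpha)\}$ for a fixed irrational $\beta$ and invoke a Vinogradov-type equidistribution theorem restricted to Frobenius classes to verify the uniform property.

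The main obstacle is the construction of $\Pi$ itself: we must satisfy countably many Dirichlet-density conditions---one for each pair (finite Galois extension of $\bQ$, conjugacy class in its Galois group)---simultaneously. Whether by the inductive/diagonal route or by the analytic/equidistribution route, this demands care to ensure that all the relevant Dirichlet densities exist and take the prescribed values. Once $\Pi$ is in hand, the conclusion $\delta_\ell(S_\ell) = \alpha$ for every finite $\ell/k$ follows by a routine application of the Chebotarev density theorem.
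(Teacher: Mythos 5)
Your reduction is correct as far as it goes: if a set $\Pi \subseteq \Sigma_\bQ$ of Dirichlet density $\alpha \in (0,\varepsilon)$ exists with the ``Chebotarev-uniform'' property, then the computation via the weights $r_1(p,\ell)$ and the Burnside identity $\sum_{g} r_1(g,\ell) = [\tilde\ell:\bQ]$ does give $\delta_\ell(S_\ell) = \alpha$ for every finite $\ell/k$, and persistence follows. But the central step --- the existence of such a $\Pi$ --- is exactly what you leave open, and you say so yourself (``the main obstacle is the construction of $\Pi$ itself''). This is a genuine gap, not a routine verification: you must impose countably many simultaneous Dirichlet-density conditions, and Dirichlet density is not $\sigma$-additive (the paper makes precisely this point in a remark following the proposition), so neither of your two sketched routes closes automatically. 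The diagonal allocation along a cofinal tower $F_0 \subsetneq F_1 \subsetneq \cdots$ needs a careful choice of thresholds and an argument that the resulting \emph{natural} densities exist in every Frobenius class (from which the Dirichlet densities would follow); the analytic route invokes equidistribution of $\{p\beta\}$ along primes \emph{restricted to a Chebotarev class}, a nontrivial theorem that you would have to prove or cite precisely. Note also that your requirement on $\Pi$ is strictly stronger than what the proposition needs: it forces $S_\ell$ to have density exactly $\alpha$ after base change to \emph{every} number field, uniformly across all conjugacy classes, whereas the statement only requires $\delta_\ell(S_\ell) = \delta_k(S)$.

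The paper avoids this difficulty entirely by working inside a single $\bZ_p$-extension $k_\infty/k$ with $\frac{1}{p-1} < \varepsilon$. It takes $S = P_{k_\infty/k}(A)$ for the open set $A = \bigcup_{n\geq 1}(a_n p^{n-1} + p^n\bZ_p) \subseteq \bZ_p$, whose key feature is self-similarity: its normalized Haar measure inside every open subgroup $p^m\bZ_p$ equals $\frac{1}{p-1}$. Since for any finite $\ell/k$ one has $\Gal_{\ell_\infty/\ell} \cong p^m\bZ_p$ for some $m$, a single application of Serre's infinite Chebotarev theorem (legitimate because $\partial A = \{0\}$ has measure zero and $k_\infty/k$ ramifies at only finitely many primes) yields $\delta_\ell(S_\ell) = \frac{1}{p-1}$ for all $\ell$ at once. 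If you want to salvage your approach, you could either carry out the diagonal construction in full detail, or simply observe that your stronger uniformity requirement is unnecessary and specialize to abelian (indeed pro-$p$ cyclic) extensions as the paper does.
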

\begin{proof}
Let $p$ be a prime such that $\frac{1}{p-1} < \varepsilon$. Let $k_\infty/k$ be a $\bZ_p$-extension (e.g.\ the cyclotomic one) with Galois group identified with $\bZ_p$. For $n\geq 1$, choose $a_n \in \bF_p^\times$ and consider the set
\[
A = \bigcup_{n\geq 1} \left(a_n p^{n-1} + p^n \bZ_p\right) \subseteq \bZ_p.
\]
We put 
\[S = P_{k_\infty/k}(A),\] 
the set of all primes unramified in $k_\infty/k$, whose Frobenius lies in $A$. Clearly, $A$ is open in $\bZ_p$ and one computes $\overline A = A \cup \{0\}$. Equip $\bZ_p$ with the invariant Haar measure $\mu$ normalized such that $\mu(\bZ_p) = 1$. Then the boundary $\overline A \sm A^\circ = \{0\}$ of $A$ has measure $0$, and the infinite Chebotarev theorem \cite[I.2.2 Corollary 2b)]{Serre_abelian} (which we may apply as $k_\infty/k$ is ramified at most in the finitely many primes above $p$ and $\infty$) then shows that $S$ has a density and that it is equal to
\[
\delta_k(S) = \mu(A) = \sum_{n\geq 1}^\infty p^{-n} = \frac{1}{p-1}.
\]

\smallskip

Now, let $\ell/k$ be a finite extension. Then there is some $m \geq 0$ such that $\ell \cap k_\infty = k_{m} := (k_\infty)^{p^{m}\bZ_p}$. Let $\ell_\infty = k_\infty . \ell$. Via $\Gal_{\ell_\infty/\ell} \cong \Gal_{k_\infty/k_{m}}$, we may identify $\Gal_{\ell_\infty/\ell}$ with $p^{m}\bZ_p \subseteq \bZ_p$. Let ${\rm Spl}_{\ell/k}$ denotes the set of primes of $\ell$, which are split (and unramified) over $k$. Then 
\begin{equation}\label{eq:formula_for_pullback_of_primes}
S_\ell \cap {\rm Spl}_{\ell/k} = P_{\ell_\infty/\ell}(A \cap p^{m}\bZ_p),
\end{equation}
where we ignore the finitely many primes of $\ell$ which ramify in $\ell_\infty/k$.
Now, $\Sigma_\ell \sm {\rm Spl}_{\ell/k}$ consists of primes of $\ell$, which are not split over $\bQ$, so it has density $0$. In particular, $\delta_\ell(S_\ell)$ exists if and only if $\delta_\ell(S_\ell \cap {\rm Spl}_{\ell/k})$ exists, in which case both agree. On the other hand, the argument using infinite Chebotarev applied above to compute $\delta_k(S)$ applies also to $P_{\ell_\infty/\ell}(A \cap p^{m}\bZ_p)$, giving $\delta_\ell(P_{\ell_\infty/\ell}(A \cap p^{m}\bZ_p)) = \frac{1}{p-1}$. Combining the two computations, we get $\delta_\ell(S_\ell) = \delta_\ell(P_{\ell_\infty/\ell}(A \cap p^{m}\bZ_p)) = \frac{1}{p-1}$, finishing the proof. \qedhere

\end{proof}

\begin{rem}
One has to be careful in the above proof, as the Dirichlet density does not satisfy $\sigma$-additivity: suppose that $T_n \subseteq \Sigma_k$ ($n \geq 1$) is a collection of mutually disjoint subsets, such that $\delta_k(T_n)$ exists. Let $T = \bigcup_n T_n$. Then it might happen that $\delta_k(T)$ does not exist, and even if it exists, it might happen that $\delta_k(T) \neq \sum_{n\geq 1} \delta_k(T_n)$ (it is enough to consider singletons $T_n = \{\fp_n\}$ for any $n$). However, by the argument in the proof of Proposition \ref{prop:absolutely_persistent_set}, the density of the set $S$, which is in fact a disjoint union of infinitely many Chebotarev sets, exists and is equal to the sum of densities of these Chebotarev sets.
\end{rem}


\section{On a question posed by Cassels}

In addition Problem \ref{problem:DZ} is strongly related to the following question stated by Cassels in 1962 that remained open for 50 years (see \cite{DP_22} for further details).

\begin{cass} 
Let $k$ be a number field and $E$ be an elliptic curve defined over $k$. Are the
elements of $\Sha(k,E)$ infinitely divisible by a prime $p$ when considered as elements of the Weil-Ch\^{a}telet group
$H^1(G_k,A)$ of all classes of principal homogeneous spaces for $E$ defined over $k$?
\end{cass}

An affirmative answer for $p>B(d)$
is implied by \cite[Theorem 3]{Cre2} and the results in \cite{PaladinoRV_12,PaladinoRV_14}. 
Since 1972 this question was considered in abelian varieties of every dimension by various authors \cite{Bas, Cre2, CS1}.
In particular, Creutz showed sufficient and necessary conditions to get an affirmative answer and
 the existence of counterexamples for every $p$ in infinitely many abelian varieties \cite{Cre2}. \cite{Cre}. Moreover,
\c{C}iperiani and Stix also showed sufficient conditions to get an affirmative answer \cite{CS1}.
\par In the spirit of this article, we can pose the following more general question. 

\begin{problem}\label{prob2}
Let $k$ be a number field and $A$ an abelian variety defined over $k$. Let $S$ be an infinite set of places
of $k$ and let
\[ 
\Sha_{S}(k,A):=\bigcap_{\mfp\in S} \ker ( H^1(G_k,A[p^n])\xrightarrow{\makebox[1cm]{{\small $res_\mfp$}}} H^1(G_{k_{\mfp}},A[p^n])).
\]
Are the elements of $\Sha_{S}(k,A)$ infinitely divisible by a prime $p$ when considered as elements of the Weil-Ch\^{a}telet group
$H^1(G_k,A)$?
\end{problem}

As a consequence of Proposition \ref{thm:p_stable_sets_test_p_divisibility}, in the case of elliptic curves
curves Problem \ref{prob2} has an affirmative answer for all $p>B(d)$, for every sets $S$ which is
$p$-stable for  $\cup_{n\in \bN} K_n^{ab}(p)/k$ with $p$-stabilizing field $k$. 

\begin{cor} \label{cor_Cass}
Let $k$ be a number field and $E$ an elliptic curve over $k$. Let $S$ be a set of places of $k$ which
is $p$-stable for $\cup_{n\in \bN} K_n^{ab}(p)/k$ with stabilizing field $k$. Then the elements of $\Sha_{S}(k,E)$ are infinitely divisible by every 
$p>B(d)$ when considered as elements of $H^1(G_k,E)$.
\end{cor}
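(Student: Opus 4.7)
The plan is to reduce Corollary \ref{cor_Cass} to the vanishing of $\Sha_S(k,E[p^n])$ for every $n$, which follows from Proposition \ref{thm:p_stable_sets_test_p_divisibility} together with the hypothesis $p>B(d)$, and then to handle the resulting $H^2$-obstruction.

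First, since $S$ is $p$-stable for $\bigcup_n K_n^{\rm ab}(p)/k$ with stabilizing field $k$, it is in particular $p$-stable for each $K_n^{\rm ab}(p)/k$ with stabilizing field $k$. Proposition \ref{thm:p_stable_sets_test_p_divisibility} then yields $\Sha_S(k,E[p^n])\simeq H^1_{\rm loc}(\Gal_{K_n/k},E[p^n])$, and the argument from the proof of Theorem \ref{thm:small_test_set} (invoking \cite{PaladinoRV_12,PaladinoRV_14}) forces the right-hand side to vanish for every $n\geq 1$ when $p>B(d)$. Hence $\Sha_S(k,E[p^n])=0$ for all $n$.

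Now fix $\xi\in\Sha_S(k,E)$ and $n\geq 1$. Since $H^1(G_k,E)$ is torsion, $p^m\xi=0$ for some $m\geq 1$. I lift $\xi$ via the Kummer sequence to some $\tilde\xi_m\in H^1(G_k,E[p^m])$. One has $\xi=p^n\eta$ for some $\eta\in H^1(G_k,E)$ if and only if $\tilde\xi_m$ lies in the image of $p^n_\ast\colon H^1(G_k,E[p^{m+n}])\to H^1(G_k,E[p^m])$ induced by $E[p^{m+n}]\xrightarrow{p^n}E[p^m]$, which by the associated long exact sequence is equivalent to the vanishing of $\delta(\tilde\xi_m)\in H^2(G_k,E[p^n])$, where $\delta$ is the connecting map of $0\to E[p^n]\to E[p^{m+n}]\xrightarrow{p^n}E[p^m]\to 0$. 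Since $\xi|_\fp=0$ for every $\fp\in S$, the class $\tilde\xi_m|_\fp$ lies in the image of the local Kummer map $E(k_\fp)/p^m\hookrightarrow H^1(G_{k_\fp},E[p^m])$; a direct check (using that the Kummer sequences for $p^m$ and $p^{m+n}$ are compatible) shows $\delta$ vanishes on this image. Consequently $\delta(\tilde\xi_m)\in\Sha^2_S(k,E[p^n])$, the analogue of \eqref{Sha} with $H^2$ in place of $H^1$.

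The main obstacle is to deduce $\Sha^2_S(k,E[p^n])=0$ from $\Sha_S(k,E[p^n])=0$. For the classical Tate--Shafarevich group (with $S$ of density $1$) this would be immediate via Poitou--Tate duality and the self-duality of $E[p^n]$ under the Weil pairing; for our restricted stable $S$ the standard duality does not apply directly. The strong hypothesis that $S$ be $p$-stable for the entire tower $\bigcup_n K_n^{\rm ab}(p)/k$, rather than for each $K_n^{\rm ab}(p)/k$ separately, should enter precisely here, supplying enough decomposition data at all levels to control the higher cohomology. Once $\delta(\tilde\xi_m)=0$ is secured, we obtain the required $\eta$ with $p^n\eta=\xi$; since $n\geq 1$ was arbitrary, $\xi$ is infinitely $p$-divisible in $H^1(G_k,E)$.
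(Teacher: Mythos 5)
Your first two steps coincide exactly with the paper's: you deduce $H^1_{\rm loc}(\Gal_{K_n/k},E[p^n])=0$ for all $n$ from \cite{PaladinoRV_12,PaladinoRV_14} when $p>B(d)$, and then $\Sha_S(k,E[p^n])=0$ from Proposition \ref{thm:p_stable_sets_test_p_divisibility}. Where you diverge is the final step. The paper closes the argument by invoking \cite[Theorem 3]{Cre2}, which is precisely the bridge from the vanishing of $\Sha_S(k,E[p^n])$ for all $n$ to the infinite $p$-divisibility of $\Sha_S(k,E)$ inside $H^1(G_k,E)$. You instead try to reprove that bridge from scratch via the Kummer sequences, and this is where the proposal has a genuine gap.

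Concretely: your reduction to the vanishing of the obstruction class $\delta(\tilde\xi_m)\in H^2(G_k,E[p^n])$, and the verification that this class is locally trivial at every $\fp\in S$, are both fine and are in the spirit of how Creutz's criterion is established. But the crucial claim --- that $\delta(\tilde\xi_m)=0$, or more ambitiously that $\Sha^2_S(k,E[p^n])=0$ --- is exactly the hard content of the cited theorem, and you supply no argument for it. You only remark that the hypothesis on $S$ ``should enter precisely here.'' This does not follow formally from $\Sha_S(k,E[p^n])=0$: relating the second locally-trivial cohomology to the first requires a duality input (Poitou--Tate and the self-duality of $E[p^n]$ under the Weil pairing), and as you yourself note, for a restricted set $S$ of small density the standard duality sequence is not available off the shelf. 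So as written the proof is incomplete at its decisive step; either cite \cite[Theorem 3]{Cre2} as the paper does (checking that its hypotheses are met for your $S$), or actually carry out the duality argument for stable sets, which is a substantial piece of work and not a one-line observation.
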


\begin{proof}
By the results in \cite{PaladinoRV_12} and \cite{PaladinoRV_14}, we have $H^1_{\loc}(G,E[p^n])=0$,
for all $p>B(d)$ and all $n\geq 1$.
By Proposition \ref{thm:p_stable_sets_test_p_divisibility}, we have $\Sha_{S}(k,E[p^n])=0$, 
for all $p>B(d)$ and all $n\geq 1$. The conclusion is then implied by \cite[Theorem 3]{Cre2}.
\end{proof}

\noindent Observe that by Proposition \ref{prop:absolutely_persistent_set} there are many sets $S$ of primes satisfying the
assumtpions of Corollary \ref{cor_Cass}.



\bibliography{bib_ADLV}{}
\bibliographystyle{amsalpha}

\end{document}